\definecolor{TITLE}{rgb}{0.0,0.0,1.0}
\definecolor{AUTHOR1}{rgb}{0.00,0.59,0.00}
\definecolor{AUTHOR2}{rgb}{0.50,0.00,1.00}
\definecolor{SECTION}{rgb}{0.50,0.00,1.00}
\definecolor{FOOTTITLE}{rgb}{0.00,0.50,0.75}
\definecolor{THM}{rgb}{0.7,0.3,0.3}
\definecolor{SEC}{rgb}{0.6,0.1,.5}
\newtheorem{theorem}{{\color{THM} Theorem}}[section]
\newtheorem{lemma}[theorem]{{\color{THM}Lemma}}
\newtheorem{proposition}[theorem]{{\color{THM}Proposition}}
\theoremstyle{definition}
\newtheorem{example}[theorem]{{\color{THM}Example}}
\newtheorem{remark}[theorem]{{\color{THM}Remark}}
\numberwithin{equation}{section}
\numberwithin{equation}{section}
\def\speaker{$ ^{*} $\protect\footnotetext{$ ^{*} $\lowercase{Corresponding Author.}}}
\begin{document}

\title {the Radon Transform on Function Spaces Related to Homogenous Spaces }

\author{ T. Derikvand, R. A. Kamyabi-Gol\speaker$   $ and M. Janfada}

\address{International Campus, Faculty of Mathematic Sciences, Ferdowsi University of Mashhad, Mashhad, Iran}
\email{derikvand@miau.ac.ir}

\address{ Department of Pure Mathematics and  Centre of Excellence
in Analysis on Algebraic Structures (CEAAS), Ferdowsi University of
Mashhad, P.O. Box 1159, Mashhad 91775, Iran}
\email{kamyabi@um.ac.ir}

\address{ Department of Pure Mathematics, Ferdowsi University of
Mashhad, P.O. Box 1159, Mashhad 91775, Iran}
\email{Janfada@um.ac.ir}

\begin{abstract}
We are going to study some conditions on which the Radon transform and its dual are invertible. Two function spaces are introduced that the Radon transform on which is bijective linear operator. In this regards, a reconstruction formula is constructed. Finally the results are supported by an example. \end{abstract}
\subjclass[2010]{44A12}

\keywords{Radon transform, homogeneous spaces, strongly quasi-invariant measure.}

\maketitle
\section{INTRODUCTION}
Let $H$ and $K$ be two closed subgroups of a locally compact group $G$. Consider two $ G $-spaces $G/K$ and $G/H$. Recall that the terminology " $X$ is a $G$-space" means that $ X $ is a locally compact Hausdorff space on which the topological group $G$ acts by an action map transitively and continuously. S. Helgason in 1966 studied the Radon transform in a general framework of $ G- $spaces for a topological group $ G $. Therein Radon transform $ R_{K, H}:C_{c}(G/K) \rightarrow C(G/H)$  and its dual $R^{*}_{K, H}:C_{c}(G/H) \rightarrow C(G/K)$
were defined by:
\begin{align}
R_{K, H}f(xH)=\int_{H/L}{f(xhK)d(hL)}\qquad (f\in C_{c}(G/K)),\label{eq11}
\end{align}
and
\begin{align}
R^{*}_{K, H}\varphi(xK)=\int_{K/L}{\varphi(xkH)d(kL)}\qquad (\varphi \in C_{c}(G/H)).\label{eq12}
\end{align} 
Where $L:=H \cap K$ and also $d\sigma$=$d(kL)$ and $d\eta=d(hL)$ are two $K-$invariant and $H-$invariant Radon measures on $K/L$ and $H/L$, respectively (for more detail see \cite{4}).
Considering Proposition $ 3.7. $ in \cite{2}, $G/K$ and $G/H$ are $ G-$homeomorphic if and only if $H$ and $K$ are conjugate in $ G $. One may ask the question: What are the necessary and sufficient conditions on $ K $ and $ H $ to exist an isomorphism between $ C_{c}(G/K)) $ and $ C_{c}(G/H)) $?
We deal with the following principal problems that were presented by S. Helgason in \cite{4}.\\
\textbf{A.} Relate function spaces on $X:=G/K$ and on $Y:=G/Y$ by means of the transforms $f \mapsto R_{K, H}f$, $ \varphi \mapsto  R^{*}_{K, H}\varphi$. In particular, determine their ranges and kernels.\\
\textbf{B.} Invert the transforms $f \mapsto R_{K, H}f $ and $ \varphi \mapsto R^{*}_{K, H}\varphi$ on suitable function spaces.\\
Considering the Helgason questions in the special case, where $L=K$ is an arbitrary closed subgroup of $H$, we get
\begin{align}
R_{L, H}f(gH)=\int_{H/L}{f(ghL)d(hL)} \qquad (f \in C_{c}(G/L)),\label{eq13}
\end{align}
and
\begin{align}
R^{*}\varphi(gL)=\varphi(gH)\int_{L/L}{d(L)}.\qquad (\varphi \in C_{c}(G/H)).\label{eq14}
\end{align}
Now we are interested in answering the following questions regarding special case of Helgason's questions.
\begin{enumerate}
\item [(i)] Is $R_{L, H}$ a surjective bounded operator from $C_{c}(G/L)$ to $C_{c}(G/H)$?
\item [(ii)] What are the changes in the definition of Radon transform and its properties if $G/H$ and $H/L$ do not possess respectively $G-$invariant and $H-$invariant Radon measures?
\item [(iii)] Under which conditions on $H$ and $L$ is the Radon transform injective?
\end{enumerate}
Also, we are interested in finding function spaces that could answer the above questions regarding as a special case of Helgason's problems.\\
The outline of the rest of this paper is as follows: In section 2 we study the preliminaries including a brief summary on homogeneous spaces and the (quasi) invariant measures on them. 
The third section is devoted to introducing two suitable function spaces on which the Radon transform and its dual are invertible. 
\section{PRELIMINARIES}
In the sequel, $H$ is a closed subgroup of a locally compact group $G$ and $ dx $, $ dh $ are the left Haar measures on $ G $ and $ H $, respectively.
We recall that the modular function $ \triangle_{G} $ is a continuous homomorphism from $G$ into the multiplicative group $ \mathbb{R}^{+} $. Furthermore,
\[\int_{G} f(y)dy=\triangle_{G}(x)\int_{G} f(yx)dy\]
where $ f\in C_{c}(G) $, the space of continuous functions on $ G $ with compact support, and $ x\in G $.
A locally compact group $ G $ is called unimodular if $ \triangle_{G}(x)=1 $, for all $ x \in G $. A compact group $ G $ is always unimodular.
It is known that $C_c(G/H)$ consists of all $P_{H}f$ functions, where $f\in C_c(G)$ and
\[P_{H}f(xH)=\int_{H}f(xh)dh\quad(x\in G).\]
Moreover, $P_{H}:C_c(G)\rightarrow C_c(G/H)$ is a bounded linear operator which is not injective (see subsection 2.6  of \cite{3}). Suppose that $ \mu $ is a Radon measure on $ G/H $. For all $ x\in G $ we
define the translation of $ \mu $ by $ x $, by $ \mu_{x}(E)=\mu (xE) $, where $ E $ is a Borel subset of $ G/H $.
Then $ \mu $ is said to be $ G-$invariant if $ \mu_{x}=\mu $, for all $ x\in G $, and is said to be strongly quasi-invariant, if there is a continuous function $ \lambda:G\times G/H\rightarrow (0, +\infty) $ which satisfies
\[d\mu_{x}(yH)=\lambda(x, yH)d\mu(yH).\]
If the function $ \lambda(x, .) $ reduces to a constant for each $ x \in G $, then $ \mu $ is called relatively invariant under $ G $. We consider a rho-function for the pair $ (G, H) $ as a continuous function $ \rho:G\rightarrow(0, +\infty) $ for which $\rho(xh)=\triangle_{H}(h)\triangle_{G}(h)^{-1}\rho(x)$, for each $x\in G$ and $h\in H$. It is well known that $ (G, H) $ admits a rho-function and for every rho-function $ \rho $ there is a strongly quasi-invariant measure $ \mu $ on $ G/H $ such that
\[\int_{G} f(x)\rho(x)dx=\int_{G/H}\int_{H} f(xh)dhd\mu(xH) \qquad (f\in C_{c}(G)).\]
This equation is called quotient integral formula . The measure $\mu$ also satisfies
\[\frac{d\mu_{x}}{d\mu}(yH)=\frac{\rho(xy)}{\rho(y)}\qquad (x,y \in G).\]
Every strongly quasi-invariant measure on $ G/H $ arises from a rho-function in this
manner, and all of these measures are strongly equivalent (Proposition 2.54 and Theorem 2.56 of \cite{3}). Therefore, if $ \mu $ is a strongly quasi-invariant measure on $ G/H $, then the measures $ \mu_{x} $, $ x\in G $, are all mutually absolutely continuous. Trivially, $ G/H $ has a $ G- $invariant Radon measure if and only if the constant function $ \rho(x)=1 $, $ x\in G $, is a rho-function for the pair $ (G, H) $.\\

If $ \mu $ is a strongly quasi invariant measure on $G/H$ which is associated with the rho$-$function $\rho$ for the pair $ (G, H) $, then the mapping $T_{H}: L^{1}(G)\rightarrow L^{1}(G/H)$ defined almost everywhere by
\[T_{H}f(xH)=\int_{H}\frac{f(xh)}{\rho(xh)}dh \qquad (f\in L^{1}(G))\]
is a surjective bounded linear operator with $\parallel T_{H} \parallel \leq1$ (see \cite{3}) and also satisfies the generalized Mackey-Bruhat formula,
\begin{align}
\int_{G}f(x)dx=\int_{G/H}T_{H}f(xH)d\mu(xH)\qquad (f\in L^{1}(G)),\label{eq21}
\end{align}
which is also known as the quotient integral formula.\\
\section{invertibility and reconstruction formula}
In this section, two suitable function spaces will be presented on which the Radon transform and its dual are invertible.\\
We assume that $G$ is a locally compact group and $H$
is its closed subgroup. The quotient space $G/H$ is
considered as a homogeneous space that $G$ acts on it from the
left and $\pi_{H}:G\longrightarrow G/H$ denotes the canonical map. It is well known that $\pi_{H}$ is open, surjective and continuous. This can be generalized to homogeneous spaces as follows:
\begin{proposition}\label{pr31}
Let $H$  be a closed subgroup of locally compact group $G$,  and $L$ be a closed subgroup of $H$. The map $\pi_{L, H}:G/L\to G/H$ defined by $\pi_{L, H}(xL)=xH$ is open, surjective and continuous.
\end{proposition}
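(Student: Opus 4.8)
The plan is to reduce everything to the single factorization identity
\begin{align}
\pi_{L,H}\circ\pi_L=\pi_H,\label{eq31plan}
\end{align}
where $\pi_L:G\to G/L$ and $\pi_H:G\to G/H$ are the canonical quotient maps. Indeed, for every $x\in G$ we have $(\pi_{L,H}\circ\pi_L)(x)=\pi_{L,H}(xL)=xH=\pi_H(x)$. Before using \eqref{eq31plan} I would first record that $\pi_{L,H}$ is well defined: if $xL=yL$ then $x^{-1}y\in L\subseteq H$, hence $xH=yH$, so the value $\pi_{L,H}(xL)=xH$ does not depend on the chosen coset representative.

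Surjectivity is then immediate, since an arbitrary point of $G/H$ has the form $xH=\pi_{L,H}(xL)$; equivalently it follows from \eqref{eq31plan} together with the surjectivity of $\pi_H$. For continuity I would invoke the universal property of the quotient topology: because $G/L$ carries the quotient topology induced by the open continuous surjection $\pi_L$, a map out of $G/L$ is continuous exactly when its composition with $\pi_L$ is continuous. By \eqref{eq31plan} that composition is $\pi_H$, which is continuous, so $\pi_{L,H}$ is continuous.

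For openness, let $U\subseteq G/L$ be open. Then $\pi_L^{-1}(U)$ is open in $G$ by continuity of $\pi_L$, and since $\pi_L$ is surjective we have $\pi_L\big(\pi_L^{-1}(U)\big)=U$. Applying $\pi_{L,H}$ and using \eqref{eq31plan} gives
\[
\pi_{L,H}(U)=\pi_{L,H}\big(\pi_L(\pi_L^{-1}(U))\big)=\pi_H\big(\pi_L^{-1}(U)\big),
\]
which is open in $G/H$ because $\pi_H$ is an open map. Hence $\pi_{L,H}$ is open. I do not expect a genuine obstacle here: the only point requiring care is making sure the set-theoretic identity $\pi_L(\pi_L^{-1}(U))=U$ is justified by surjectivity of $\pi_L$ so that the openness argument goes through, and otherwise the result is a clean consequence of the standard fact that $\pi_L$ and $\pi_H$ are open, continuous, surjective quotient maps linked by \eqref{eq31plan}.
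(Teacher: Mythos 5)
Your proof is correct and follows essentially the same route as the paper: both arguments rest on the factorization $\pi_{L,H}\circ\pi_L=\pi_H$, deduce continuity from the fact that $\pi_L$ is an open continuous surjection (the paper cites this as Lemma 3.1.9 of its reference [7], which is the universal property you invoke), and establish openness via the identity $\pi_{L,H}(U)=\pi_H(\pi_L^{-1}(U))$. Your write-up is if anything slightly more careful, since you explicitly justify $\pi_L(\pi_L^{-1}(U))=U$ by surjectivity and spell out well-definedness.
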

\begin{proof}
Trivially the map $\pi_{L, H}$ is well defined.
We have $\pi_{H}=\pi_{L, H}\circ \pi_{L}$, where $\pi_{L}:G\to G/L$, $\pi_{H}:G\to G/H$ are the canonical quotient maps. By using Lemma $ 3.1.9 $ of \cite{7}, since $\pi_{L}$ is an open and surjective map and $\pi_{H}$ is a continuous map, we have $\pi_{L, H}$ is continuous. Also, $ \pi_{L, H}$ is an open map because for each open set $U$ in $G/L$
\begin{align*}
\pi_{L, H}(U)&=\{\pi_{L, H}(xL)\ |\ xL\in U\}\\
&=\{xH\ |\ xL\in U\}\\
&=\{\pi_{H}(x)\ |\ xL\in U\}\\
&=\pi_{H}\{x\in G\ |\ xL\in U\}\\
&=\pi_{H}\{x\in G\ |\ \pi_{L}(x)\in U\}\\
&=\pi_{H}(\pi_{L}^{-1}(U)),
\end{align*}
meanwhile $\pi_{L}$ is a continuous map and $\pi_{H}$ is an open map.\\
\end{proof}
It is worthwhile to note that if $G/L$ is locally compact (respectively, compact), then $G/H$ is also locally compact (respectively, compact).
Now suppose that $\Delta_H\big|_L=\Delta_L$, where $\Delta_H$  and  $\Delta_L$ are the modular functions of $H$ and $L$, respectively. In this case, the quotient integral formula \eqref{eq21} guarantees the existence of a unique (up to a constant) $H$-invariant measure $\eta$ on $H/L$. Now we define the Radon transform $R_{L, H}:C_c(G/L)\to C_c(G/H)$ by
\begin{align}
R_{L, H}f(xH)=\int_{H/L}f(xhL)d\eta(hL)\qquad (f\in C_{c}(G/L)),\label{eq31}
\end{align}
which is well defined by the following proposition. Note that if $L $ is the trivial subgroup of $ H $ then clearly $ R_{L, H}$  is $P_{H}$.
\begin{proposition}\label{pr32}
If $f\in C_c(G/L)$, then $ R_{L, H}f \in C_c(G/H) $ and $R_{L, H}[(\phi\circ \pi_{L, H})\cdot f]=\phi\cdot R_{L, H}f$, for all $ \phi \in C_c(G/H) $.
\end{proposition}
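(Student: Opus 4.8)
The plan is to deduce both assertions from the mapping properties of the projection $P_{H}\colon C_{c}(G)\to C_{c}(G/H)$ recalled in Section 2, the main vehicle being the identity $R_{L,H}\circ P_{L}=P_{H}$. First I would recall that $C_{c}(G/L)=P_{L}(C_{c}(G))$, so it is enough to argue for $f=P_{L}g$ with $g\in C_{c}(G)$. Fix a representative $x$ of $xH$. The function $h\mapsto g(xh)$ belongs to $C_{c}(H)$, since its support sits inside the compact set $H\cap x^{-1}\operatorname{supp}g$, and applying the projection $C_{c}(H)\to C_{c}(H/L)$ (the analogue of $P_{H}$ for the pair $(H,L)$) to it returns precisely the map $hL\mapsto\int_{L}g(xh\ell)\,d\ell=(P_{L}g)(xhL)$. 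Thus the integrand in \eqref{eq31} is continuous with compact support on $H/L$, hence $\eta$-integrable, so $R_{L,H}f(xH)$ is a convergent integral.

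The crux is the next step, which invokes the quotient integral formula for the pair $(H,L)$; note that the hypothesis $\Delta_{H}\big|_{L}=\Delta_{L}$ is exactly what makes the constant function a rho-function for $(H,L)$, so that $\eta$ is $H$-invariant and the formula reads $\int_{H}\psi(h)\,dh=\int_{H/L}\int_{L}\psi(h\ell)\,d\ell\,d\eta(hL)$. Taking $\psi(h)=g(xh)$ gives
\[
R_{L,H}(P_{L}g)(xH)=\int_{H/L}\int_{L}g(xh\ell)\,d\ell\,d\eta(hL)=\int_{H}g(xh)\,dh=P_{H}g(xH).
\]
Because the last expression depends only on $xH$ (by left invariance of the Haar measure on $H$), this simultaneously shows that $R_{L,H}f$ is well defined on $G/H$ and that $R_{L,H}f=P_{H}g$. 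Since $P_{H}g\in C_{c}(G/H)$ by Section 2, we obtain $R_{L,H}f\in C_{c}(G/H)$. I expect this passage, from the pointwise convergence of the defining integral to genuine membership in $C_{c}(G/H)$ (continuity together with compactness of support), to be the only real obstacle, and the identity $R_{L,H}\circ P_{L}=P_{H}$ is designed precisely to sidestep it by transferring the continuity and support bookkeeping to the operator $P_{H}$, whose properties are already available.

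It remains to verify the module identity, which is a direct computation. For $\phi\in C_{c}(G/H)$ the function $\phi\circ\pi_{L,H}$ is continuous on $G/L$, so $(\phi\circ\pi_{L,H})\cdot f$ again lies in $C_{c}(G/L)$ (its support is contained in that of $f$) and $R_{L,H}$ may be applied to it. The point is that $\pi_{L,H}$ collapses each fiber over $xH$: for every $h\in H$ one has $(\phi\circ\pi_{L,H})(xhL)=\phi(xhH)=\phi(xH)$, so the factor $\phi(xH)$ is constant in the variable of integration and can be pulled out,
\[
R_{L,H}\big[(\phi\circ\pi_{L,H})\cdot f\big](xH)=\int_{H/L}\phi(xH)\,f(xhL)\,d\eta(hL)=\phi(xH)\,R_{L,H}f(xH)=(\phi\cdot R_{L,H}f)(xH),
\]
which is the desired equality.
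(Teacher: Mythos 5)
Your proof is correct, but it takes a genuinely different route from the paper's. The paper argues directly from the definition: it shows the integrand $hL\mapsto f(xhL)$ is supported in the compact set $(U^{-1}\operatorname{supp}f)\cap H/L$ for $x$ ranging over a compact neighborhood $U$ of $x_0$, then runs an $\varepsilon$-argument using the uniform continuity of $f$ to get continuity of $R_{L,H}f$, and separately proves the support estimate $\operatorname{supp}(R_{L,H}f)\subseteq \pi_{L,H}(\operatorname{supp}f)$. You instead factor through the projections: writing $f=P_Lg$ (legitimate, since the paper cites the surjectivity of $P_L$ from Folland) and applying the Weil/quotient integral formula for the pair $(H,L)$ — which is indeed exactly what the standing hypothesis $\Delta_H|_L=\Delta_L$ licenses — you obtain the intertwining identity $R_{L,H}\circ P_L=P_H$, and then membership of $R_{L,H}f$ in $C_c(G/H)$ is inherited from the known mapping properties of $P_H$. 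Your approach is shorter and yields a structurally useful identity (it also makes transparent the paper's remark that $R_{L,H}=P_H$ when $L$ is trivial); the paper's approach is more self-contained and gives the explicit support inclusion $\operatorname{supp}(R_{L,H}f)\subseteq\pi_{L,H}(\operatorname{supp}f)$, which your argument only recovers indirectly. The verification of the module identity $R_{L,H}[(\phi\circ\pi_{L,H})\cdot f]=\phi\cdot R_{L,H}f$ is the same computation in both: the factor $\phi(xhH)=\phi(xH)$ is constant on the fiber and pulls out of the integral.
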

\begin{proof}
The map $ R_{L, H}:C_c(G/L) \to C_c(G/H) $ is well defined by the left-invariance of $\eta$ and the fact that any $ f \in C_c(G/L)$ is uniformly continuous (Lemma 1.3.3 in \cite{6}). Fix $x_0\in G$ and pick a compact neighborhood $U$ of $x_{0}$ in $G$, then for every $x\in U$, the function $hL\mapsto f(xhL)$ is supported in the compact set $(U^{-1}suppf)\cap H/L$. Indeed we have
\begin{align*}
\overline{\{hL: f(xhL)\neq0\}}\subseteq (U^{-1}suppf)\cap H/L.
\end{align*}
$M:=(U^{-1}suppf)\cap H/L$ and $\varepsilon>0$ are given. By the left uniformly continuity of $f$, there exists a neighborhood $V_{1}$ of $1$ in $G$ such that
\[|f(xhL)-f(x_{0}hL)|<\frac{\varepsilon}{\eta(M)},\]
whenever $x,x_{0}\in G$ with $xx_{0}^{-1}\in V_{1}$. Put $V_{2}:=V_{1}x_{0}^{-1}$, so
$|f(xhL)-f(x_{0}hL)|<\frac{\varepsilon}{\eta(M)}$ for every $x\in V_{2}.$
Now for every $x\in V_{3}:= V_{2}\cap U$, we have
\begin{align*}
\vert R_{L, H}f(xH)-R_{L, H}f(x_{0}H)\vert &=\vert \int_{H/L}(f(xhL)-f(x_{0}hL))d\eta(hL)\vert\\
&\leq\int_{H/L}\vert f(xhL)-f(x_{0}hL)\vert d\eta(hL)\\
&\leq \int_{M}\vert f(xhL)-f(x_{0}hL)\vert d\eta(hL)\\
&<\int_{M}\frac{\varepsilon}{\eta(M)}d\eta(hL)\\
&<\varepsilon,
\end{align*}
which shows the continuity of $R_{L, H}f$.\\
Moreover, if $\pi_{L, H}(xL)\not\in \pi_{L, H}(supp\ f)$, then $ \pi_{L, H}(xhL)\not\in \pi_{L, H}(supp\ f)$ for all $h\in H$. This implies that $ xhL\not\in supp\ f $, for all $h\in H$; so $ f(xhL)=0 $ for all $h\in H$. Hence $ \int_{H/L}f(xhL)d\eta(hL)=R_{L, H}f(xH)=0 $, so $ xH\not\in supp(R_{L, H}f) $. Thus $(supp\ R_{L, H}f) \subseteq \pi_{L, H}(supp\ f) $ and so is compact.\\
Finally if $\phi\in C(G/H)$, we have
\begin{align*}
R_{L, H}[(\phi\circ \pi_{L, H})\cdot f](xH)&=\int_{H/L}[\phi\circ \pi_{L, H})\cdot f](xhL)d\eta (hL)\\
&=\int_{H/L}(\phi\circ \pi_{L, H})(xhL)\cdot f(xhL)d\eta(hL)\\
&=\int_{H/L} \phi(xH)\cdot f(xhL)d\eta (hL)\\
&=\phi(xH)\int_{H/L}f(xhL)d\eta(hL)\\
&=\phi(xH)R_{L, H}f(xH)\\&=(\phi\cdot R_{L, H}f)(xH),
\end{align*}
so $R_{L, H}[(\phi\circ \pi_{L, H})\cdot f]=\phi\cdot R_{L, H}f$.\\
\end{proof}
Let $H$ be a compact subgroup of a locally compact group $G$ with left-invariant Haar measure $dh$, and $ L $  is a closed subgroup of $ H $. Then by compactness of $ H $ there are two invariant Radon measures $ d(xL) $, $ d(hL) $  on $G/L$ and $H/L$, respectively.  By using \eqref{eq11}, we get\\
\begin{align}
R_{L, H}f(xH)=\int_{H/L}{f(xhL)d\eta(hL)} \qquad (f \in C_{c}(G/L)). \label{eq41}
\end{align}
Also, for all $\varphi$ in $C_{c}(G/H)$ we have\\
\begin{align*}
R^{*}_{L, H}\varphi (xL)\
&=\int_{L/L}\varphi(xlH){d\sigma(L)}\\
&=\varphi(xH)\int_{{L}}{d\sigma(L)}\\
&=\varphi(xH)\\
&=(\varphi \circ \pi_{L, H})(xL),
\end{align*}
where $\pi_{L, H}:G/L\to G/H$ given by $\pi_{L, H}(xL)=xH$, is the canonical map. From now on, we abbreviate $ \varphi \circ \pi_{L, H} $ by $ \varphi_{\pi_{L, H}} $.
Now for all $f$ in $C_{c}(G/L)$, we have\\
\begin{align*}
[(R^{*}_{L, H} \circ R_{L, H})f](xL)\
&=[R^{*}_{L, H} (R_{L, H}f)](xL)\\
&=[(R_{L, H}f) \circ \pi_{L, H}](xL),
\end{align*}
for all $xL \in G/L$. Thus $ (R^{*}_{L, H} \circ R_{L, H})f=R_{L, H}f \circ \pi_{L, H} =(R_{L, H}f)_{\pi_{L, H}}$. For any $\varphi $ in $C_{c}(G/H)$, we have\\
\begin{align*}
[(R_{L, H} \circ R^{*}_{L, H})\varphi](xH)\
&=[R_{L, H} (R^{*}_{L, H}\varphi)](xH)\\
&=\int_{H/L}R^{*}_{L, H}\varphi(xhL){d\eta(hL)}\\
&=\int_{H/L}\varphi\circ\pi_{L, H}(xhL){d\eta(hL)}\\
&=\int_{H/L}\varphi[\pi_{L, H}(xhL)]{d\eta(hL)}\\
&=\int_{H/L}\varphi(xH){d\eta(hL)}\\
&=\varphi(xH)\int_{H/L}{d\eta(hL)}\\
&=\varphi(xH),
\end{align*}
for all $xH \in G/H$.
Note that if $L$ is a closed subgroup of a compact group $H$, then $H/L$ is compact.
Thus $ R_{L, H}\circ R^{*}_{L, H}=id_{C_{c}(G/H)} $.\\
\begin{proposition}
Let $H$ be a compact subgroup of a locally compact group $G$ and suppose $ L $  is a closed subgroup of $ H $.\\
a) There exists a unique Radon measure $ \mu $ such that
\begin{center}
$ (P^{*}_{H} \circ P_{H})f=f\ast\mu \quad(f\in C_c(G))$
\end{center}
b) $ P_{H}\circ P^{*}_{H}=id_{C_{c}(G/H)} $.
\end{proposition}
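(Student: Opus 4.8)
The plan is to read both parts as the instance $L=\{e\}$ of the transforms already set up (note that $P_H$ and $P^*_H$ do not actually involve $L$), and then to identify $P^*_H\circ P_H$ as a right convolution operator. First I would record the mapping properties. Putting $L=\{e\}$ in the dual transform gives $P^*_H\varphi=\varphi\circ\pi_H$, i.e. $(P^*_H\varphi)(x)=\varphi(xH)$. Since $H$ is compact, the canonical map $\pi_H$ is proper: given a compact $C\subseteq G/H$ one may choose a compact $D\subseteq G$ with $\pi_H(D)\supseteq C$, so that $\pi_H^{-1}(C)\subseteq DH$ is a closed subset of the compact set $DH$, hence compact. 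Therefore $\operatorname{supp}(P^*_H\varphi)\subseteq\pi_H^{-1}(\operatorname{supp}\varphi)$ is compact, so $P^*_H$ really maps $C_c(G/H)$ into $C_c(G)$ and the composite $P^*_H\circ P_H$ is defined on $C_c(G)$.

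For part (a), I would compute the composite directly: for $f\in C_c(G)$ and $x\in G$,
\[(P^*_H\circ P_H)f(x)=(P_Hf)(xH)=\int_H f(xh)\,dh.\]
I then define $\mu$ to be the Radon measure on $G$ determined by $\int_G\psi\,d\mu=\int_H\psi(h)\,dh$ for $\psi\in C_c(G)$; this is a finite measure carried by the compact set $H$, namely the Haar measure of $H$ regarded as a measure on $G$. The key structural fact is that a compact subgroup is unimodular and, in addition, $\Delta_G|_H\equiv 1$, because $\Delta_G(H)$ is a compact subgroup of the multiplicative group $\mathbb R^+$ and hence trivial. With the standard convolution of a function by a measure, $(f*\lambda)(x)=\int_G \Delta_G(y)^{-1}f(xy^{-1})\,d\lambda(y)$; taking $\lambda=\mu$ and using $\Delta_G|_H\equiv 1$ together with the inversion-invariance of Haar measure on the compact group $H$ gives $(f*\mu)(x)=\int_H f(xh^{-1})\,dh=\int_H f(xh)\,dh$. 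Hence $(P^*_H\circ P_H)f=f*\mu$.

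Uniqueness follows from a standard separation argument. If a Radon measure $\nu$ also satisfies $f*\nu=(P^*_H\circ P_H)f$ for all $f\in C_c(G)$, then $f*(\mu-\nu)=0$; evaluating at the identity gives $\int_G \Delta_G(y)^{-1}f(y^{-1})\,d(\mu-\nu)(y)=0$ for every $f\in C_c(G)$. Since $f\mapsto\big(y\mapsto\Delta_G(y)^{-1}f(y^{-1})\big)$ is a bijection of $C_c(G)$, this yields $\int_G g\,d(\mu-\nu)=0$ for all $g\in C_c(G)$, whence $\mu=\nu$ by the Riesz representation theorem. Part (b) is the identity $R_{L,H}\circ R^*_{L,H}=\mathrm{id}_{C_c(G/H)}$ established above, specialized to $L=\{e\}$; equivalently, for $\varphi\in C_c(G/H)$,
\[(P_H\circ P^*_H)\varphi(xH)=\int_H(P^*_H\varphi)(xh)\,dh=\int_H\varphi(xH)\,dh=\varphi(xH),\]
once the Haar measure on the compact group $H$ is normalized so that $\int_H dh=1$.

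I expect the main obstacle to be the bookkeeping of the convolution-with-a-measure convention in the presence of the modular functions of $G$ and $H$, since $G$ itself need not be unimodular; the identity $(P^*_H\circ P_H)f=f*\mu$ is only as clean as it is because $H$ is compact. The observation that compactness forces both $\Delta_H\equiv 1$ and $\Delta_G|_H\equiv 1$ removes all modular corrections on the support of $\mu$, after which the verification of (a), the uniqueness argument, and the direct computation in (b) are routine.
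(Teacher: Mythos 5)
Your proof is correct and follows essentially the same route as the paper: both of you realize $\mu$ as the Haar measure of $H$ transported to a Radon measure on $G$ (the paper via the positive functional $f\mapsto\int_H f|_H(h)\,dh$ and Riesz representation), identify $(P^{*}_{H}\circ P_{H})f(x)=\int_H f(xh)\,dh$ with $f*\mu$, and prove (b) by the same one-line computation using the normalization $\int_H dh=1$. Where you add genuine value: the paper silently uses the convention $f*\mu(x)=\int_G f(xy)\,d\mu(y)$, so it never confronts modular factors, whereas your version with the standard convolution-by-a-measure convention needs, and correctly supplies, the facts that $\Delta_G|_H\equiv 1$ and that the compact group $H$ is unimodular (so $\int_H f(xh^{-1})\,dh=\int_H f(xh)\,dh$). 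More importantly, the statement asserts \emph{uniqueness} of $\mu$, which the paper never proves; your separation argument (evaluate $f*(\mu-\nu)$ at the identity, note that $f\mapsto(y\mapsto\Delta_G(y)^{-1}f(y^{-1}))$ is a bijection of $C_c(G)$, and apply uniqueness in the Riesz theorem) fills that gap, though it is cleaner to phrase it as $\int g\,d\mu=\int g\,d\nu$ for all $g\in C_c(G)$ rather than subtracting possibly infinite measures. Your preliminary check that $P^{*}_{H}$ maps $C_c(G/H)$ into $C_c(G)$ (properness of $\pi_H$ for compact $H$) is likewise needed for the composite in (a) to make sense and is taken for granted in the paper.
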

\begin{proof}
a) For a given $ f $ in $ C_c(G) $, define $ F: C_c(G)\rightarrow \mathbb{C}$ by $ F(f):=\int_{H}f\mid_{H}(h)dh\
 $, where $ dh $ is the Haar measure of $ H $. Note that the integral is well defined since $f\mid_{H}$  $\in C_{c}(H) $.  Also trivially, $ F $ is a positive linear functional on $C_{c}(G)$. So it defines
a Radon measure $ \mu $ on $ G $ such that $ F(f)=\int_{G}fd\mu $. We obtain
\begin{align}
f\ast\mu(x)=\int_{G}f(xy)d\mu(y)=\int_{H}f(xh)dh \quad(f\in C_c(G)) \label{eq42}
\end{align}
and
\begin{align}
[(P^{*}_{H} \circ P_{H})f](x)=P^{*}_{H} f(xH)=\int_{H}f(xh)dh. \label{eq43}
\end{align}
Now the identities \eqref{eq42}, \eqref{eq43} imply the result.\\
b) Let $\varphi $ be in $C_{c}(G/H)$. Then we have\\
\begin{align*}
(P_{H} \circ P^{*}_{H})\varphi(xH)\
&=P_{H} (P^{*}_{H}\varphi)(xH)\\
&=\int_{H}P^{*}_{H}\varphi(xh){dh}\\
&=\int_{H}\varphi\circ\pi_{H}(xh){dh}\\
&=\int_{H}\varphi[\pi_{H}(xh)]{dh}\\
&=\int_{H}\varphi(xH){dh}\\
&=\varphi(xH)\int_{H}{dh}\\
&=\varphi(xH),
\end{align*}
for all $xH \in G/H$. Thus $ P_{H}\circ P^{*}_{H}=id_{C_{c}(G/H)} $.\\
\end{proof}

Now let $H$ be a compact subgroup of a locally compact group $G$ and $ L $  be a closed subgroup of $ H $. Suppose that
\begin{center}
$ C_c(G/L  :  H):=\lbrace f \in C_c(G/L) : f(xhL)=f(xL) $,   $\forall x \in G,$   $ \forall h \in H\rbrace$.\\
\end{center}
One can easily prove that
\begin{center}
$ C_c(G/L  :  H)=\lbrace \varphi_{\pi_{L, H}}:=\varphi \circ \pi_{L, H} : \varphi \in C_c(G/H) \rbrace$.\\
\end{center}
If $ f \in C_c(G/L) $ and $ g \in C_c(G/L  :  H) $ then $ f*g \in C_c(G/L  :  H) $ and therefore $ C_c(G/L  :  H) $ is a left ideal
and also a sub-algebra of $ C_c(G/L) $. The algebra $ C_c(G/L  :  H)$ is unital and the unit element is $ \chi_{H/L} $. If $ K $ is another compact subgroup of $ G $ then $ C_c(G/L  :  H)=C_c(G/L  :  K) $ if and only if $ H=K $.
\begin{theorem}\label{th2}
Let $H$ be a compact subgroup of a locally compact group $G$ with left Haar measure $dh$, and $L$ be a closed subgroup of $H$ with left Haar measure $dl$. Suppose $d\sigma$=$d(xL)$ and $d\eta=d(xH)$ are two $L-$invariant and $H-$invariant Radon measures on $G/L$ and $H/L$, respectively. Then the Radon transform $ R_{L, H} $ is a bijective linear operator from $C_c(G/L  :  H)  $ to $ C_c(G/H) $; Further, we have
$\left( R_{L, H}f\right)_{\pi_{L, H}}=f$, for all $ f \in C_c(G/L  :  H) $.  
\end{theorem}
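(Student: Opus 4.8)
The plan is to exhibit $R_{L,H}$ and its dual $R^{*}_{L,H}$ as mutually inverse maps between $C_c(G/L:H)$ and $C_c(G/H)$, so that bijectivity follows for free from the two composition identities already established above. First I would note that linearity of $R_{L,H}$ is immediate from linearity of the integral in \eqref{eq31}, and that $R_{L,H}$ carries $C_c(G/L:H)$ into $C_c(G/H)$ simply by restricting to this subspace the general mapping property proved in Proposition \ref{pr32}; no fresh continuity or compact-support estimate is required.

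The heart of the matter is the displayed identity $(R_{L,H}f)_{\pi_{L,H}}=f$ for $f\in C_c(G/L:H)$. To obtain it, fix such an $f$; by the defining property of $C_c(G/L:H)$ we have $f(xhL)=f(xL)$ for every $h\in H$, so the integrand in \eqref{eq31} is constant in $hL$ and
\begin{align*}
R_{L,H}f(xH)=\int_{H/L}f(xhL)\,d\eta(hL)=f(xL)\int_{H/L}d\eta(hL)=f(xL),
\end{align*}
where the last step uses the normalization $\int_{H/L}d\eta(hL)=1$ of the $H$-invariant measure on the compact space $H/L$ (the same normalization underlying the earlier identity $R_{L,H}\circ R^{*}_{L,H}=id_{C_c(G/H)}$). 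Composing with $\pi_{L,H}$ yields $(R_{L,H}f)_{\pi_{L,H}}(xL)=R_{L,H}f(xH)=f(xL)$, that is, $(R_{L,H}f)_{\pi_{L,H}}=f$. Since $R^{*}_{L,H}\psi=\psi_{\pi_{L,H}}$ for $\psi\in C_c(G/H)$, this says exactly that $R^{*}_{L,H}\circ R_{L,H}=id$ on $C_c(G/L:H)$, which already forces $R_{L,H}$ to be injective.

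For surjectivity I would invoke the companion identity $R_{L,H}\circ R^{*}_{L,H}=id_{C_c(G/H)}$ proved above. Given $\varphi\in C_c(G/H)$, the function $R^{*}_{L,H}\varphi=\varphi_{\pi_{L,H}}$ lies in $C_c(G/L:H)$ by the characterization $C_c(G/L:H)=\{\varphi_{\pi_{L,H}}:\varphi\in C_c(G/H)\}$ recorded just before the theorem, and it satisfies $R_{L,H}(R^{*}_{L,H}\varphi)=\varphi$; hence every element of $C_c(G/H)$ is attained from within $C_c(G/L:H)$. Combined with injectivity, this shows $R_{L,H}$ is a linear bijection of $C_c(G/L:H)$ onto $C_c(G/H)$, its inverse being the restriction of $R^{*}_{L,H}$. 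I expect no genuine obstacle here, since the mapping property, the two composition identities, and the description of $C_c(G/L:H)$ as the range of $R^{*}_{L,H}$ are all already available; the only points demanding care are making the normalization $\int_{H/L}d\eta(hL)=1$ explicit and observing that the $H$-invariance built into membership in $C_c(G/L:H)$ is precisely what collapses the defining integral to the point evaluation $f(xL)$.
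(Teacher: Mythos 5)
Your proposal is correct and follows essentially the same route as the paper: the paper's surjectivity step is literally the computation $R_{L,H}\varphi_{\pi_{L,H}}=\varphi$ (i.e.\ the identity $R_{L,H}\circ R^{*}_{L,H}=\mathrm{id}$ you cite), and its injectivity step is the collapse $R_{L,H}f(xH)=f(xL)\int_{H/L}d\eta(hL)=f(xL)$, which is exactly your left-inverse identity $(R_{L,H}f)_{\pi_{L,H}}=f$. Your version is slightly cleaner in that it makes the normalization $\int_{H/L}d\eta(hL)=1$ explicit rather than leaving it implicit.
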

\begin{proof}
Let $\phi\in C_c(G/H)$. Since $\phi_{\pi_{L, H}} \in C_c(G/L)$, we get
\begin{align*}
\phi_{\pi_{L, H}}(xhL)\
&=\phi\circ \pi_{L, H}(xhL)\\
&=\phi(xH)\\
&=\phi\circ \pi_{L, H}(xL)\\
&=\phi_{\pi_{L, H}}(xL),
\end{align*}
for all $ x \in G $ and $ h \in H $. Thus $ \phi_{\pi_{L, H}} \in C_c(G/L  :  H) $ and for all $ x \in G $ we have
\begin{align*}
R_{L, H}\phi_{\pi_{L, H}}(xH)\
&=\int_{H/L}\phi_{\pi_{L, H}}(xhL)d(hL)\\
&=\int_{H/L}\phi\circ \pi_{L, H}(xhL)d(hL)\\
&=\int_{H/L}\phi(xH)d(hL)\\
&=\phi(xH)\int_{H/L}d(hL)\\
&=\phi(xH).
\end{align*}
So $ R_{L, H} $ is surjective.\\
Let $ f \in C_c(G/L  :  H) $ and $ R_{L, H}f=0 $. Thus for all $ xL \in G/L $
\begin{align*}
f(xL)\
&=f(xL)\int_{H/L}d(hL)\\
&=\int_{H/L}f(xL)d(hL)\\
&=\int_{H/L}f(xhL)d(hL)\\
&=R_{L, H}f(xH)\\
&=0.
\end{align*}
Thus $ R_{L, H} $ is a one to one linear transform. Also $ f(xL)=R_{L, H}f(xH) $ implies that $\left( R_{L, H}f\right)_{\pi_{L, H}}=f$, for all $ f \in C_c(G/L  :  H) $.   
\end{proof}
\begin{example}
Let $ G $ denote the multiplicative group of nonzero complex numbers and $ L:=\lbrace\pm1\rbrace, H:=\lbrace\pm1, \pm{i}\rbrace $ be two compact subgroups of it. Assume that for any $ z $ in $\mathbb{C}-\lbrace 0 \rbrace $, $\varphi(z)= \begin{cases} 1-\mid z \mid &\mbox{for } \mid z \mid<1 \\
0 & \mbox{for } \mid z \mid \geq1 \end{cases}$. Then clearly $\varphi\in C_{c}(G)$. Now define $ f(zL):=P_{L}\varphi(zL)=2\varphi(z)$ for all $ z \in \mathbb{C}-\lbrace 0 \rbrace $. Then it is easy to verify that  $f(zL)=f(zhL)$. So $f \in C_c(G/L  :  H)$. Now we have
$\left( R_{L, H}f\right) _{\pi_{L, H}}(zL)=R_{L, H}f (zH) =2\varphi(z)=f(zL)$.
\end{example}
Now Let $H$ and $K$ be two compact subgroups of a locally compact group $G$ and $ L:=H\cap K $. Suppose that
\[ C_c(G/K  :  H):=\lbrace f \in C_c(G/K) : f(xhK)=f(xK) ,   \forall x \in G,    \forall h \in H\rbrace.\]
One can show that $ C_c(G/K  :  H) $ is a left ideal in the algebra $ C_c(G/K) $.
\begin{proposition}\label{pr3}
Let $H$ and $K$ be two compact subgroups of a locally compact group $G$ with left Haar measures $dh$ and $dk$, respectively. Also, let $L:=H\cap K$ with left Haar measure $dl$. Assume that $ d(kL) $, $ d(hL) $ are two invariant Radon measures on $K/L$ and $H/L$, respectively. Then the Radon transform $R_{K, H}:C_{c}(G/K : H)\rightarrow C(G/H : K)$  and its dual $R^{*}_{K, H}:C_{c}(G/H : K) \rightarrow C(G/K : H)$ are bijective linear operators. Furthermore, we have $(R_{K, H}f)_{\pi_{L, H}}=f_{\pi_{L, K}}$ for all $ f $ in $ C_{c}(G/K : H) $.\\
\end{proposition}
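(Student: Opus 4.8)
The plan is to exploit the compactness of $H$ and $K$ to show that, on the doubly invariant subspaces, both $R_{K,H}$ and $R^*_{K,H}$ degenerate to simple relabeling maps, after which bijectivity and the reconstruction identity follow almost formally. First I would normalize the two invariant measures so that $\int_{H/L}d(hL)=\int_{K/L}d(kL)=1$; this is legitimate since $H$ and $K$ are compact, so $H/L$ and $K/L$ are compact and their invariant measures (unique up to a positive scalar) are finite. Because $H$ is compact the integrand $hL\mapsto f(xhK)$ is continuous on the compact space $H/L$, so the integral defining $R_{K,H}f$ converges, and the $H$-invariance of $d(hL)$ makes its value independent of the representative of $xH$, exactly as in Proposition \ref{pr32}.

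The heart of the argument is the observation that on the restricted domain the transform is a pointwise substitution. For $f\in C_c(G/K:H)$ the defining relation $f(xhK)=f(xK)$ makes the integrand constant in $hL$, so
\begin{align*}
R_{K,H}f(xH)=\int_{H/L}f(xhK)\,d(hL)=f(xK)\int_{H/L}d(hL)=f(xK).
\end{align*}
This one identity does most of the work. It shows that $xH\mapsto f(xK)$ is well defined on $G/H$ (if $xH=yH$, say $y=xh$, then $f(yK)=f(xhK)=f(xK)$); that $R_{K,H}f$ is right $K$-invariant, since $R_{K,H}f(xkH)=f(xkK)=f(xK)=R_{K,H}f(xH)$, whence $R_{K,H}f\in C(G/H:K)$; and that the reconstruction identity holds, because $(R_{K,H}f)_{\pi_{L,H}}(xL)=R_{K,H}f(xH)=f(xK)=f_{\pi_{L,K}}(xL)$. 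Injectivity is then immediate, as $R_{K,H}f=0$ forces $f(xK)=0$ for every $x\in G$.

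For surjectivity, given $\phi\in C_c(G/H:K)$ I would set $f(xK):=\phi(xH)$; this is well defined on $G/K$ precisely because the $K$-invariance $\phi(xkH)=\phi(xH)$ guarantees the value depends only on $xK$, and $f\in C_c(G/K:H)$ since $f(xhK)=\phi(xhH)=\phi(xH)=f(xK)$. The displayed identity then gives $R_{K,H}f=\phi$. Running the identical computation with $H$ and $K$ (and the two invariant measures) interchanged handles the dual, yielding $R^*_{K,H}\varphi(xK)=\varphi(xH)$ for $\varphi\in C_c(G/H:K)$; the two relabeling maps are visibly inverse to one another, so $R_{K,H}$ and $R^*_{K,H}$ are mutually inverse linear bijections.

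The one step requiring genuine care, which I expect to be the main obstacle, is checking that the images really have compact support, so that $R_{K,H}$ lands in $C_c(G/H:K)$ (the codomain is written $C(G/H:K)$, but compactness of $H$ and $K$ forces compact support). The naive map $xK\mapsto xH$ is not well defined, since $K\not\subseteq H$ in general, so the support estimate of Proposition \ref{pr32} cannot be quoted verbatim. Instead I would use that compactness of $K$ makes the canonical quotient map $\pi_K:G\to G/K$ proper, so that $\pi_K^{-1}(\mathrm{supp}\,f)$ is compact and hence $\mathrm{supp}(R_{K,H}f)\subseteq \pi_H\big(\pi_K^{-1}(\mathrm{supp}\,f)\big)$ is compact as a continuous image of a compact set; the symmetric statement handles $R^*_{K,H}$. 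All remaining verifications reduce to the two invariance conditions defining $C_c(G/K:H)$ and $C_c(G/H:K)$ and present no difficulty.
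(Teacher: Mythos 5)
Your proof is correct and rests on the same key identity as the paper's: for $f\in C_c(G/K:H)$ the integrand is constant in $hL$, so $R_{K,H}f(xH)=f(xK)$ (with the invariant measures normalized to total mass one, which the paper uses implicitly), and injectivity, membership in $C(G/H:K)$, and the reconstruction formula $(R_{K,H}f)_{\pi_{L,H}}=f_{\pi_{L,K}}$ all follow from this one line exactly as you say. The only place you diverge is surjectivity: the paper produces the preimage of $\phi\in C(G/H:K)$ by factoring through $G/L$, invoking Theorem \ref{th2} to write $\phi=R_{L,H}\phi_{\pi_{L,H}}$ and then setting $\psi:=R_{L,K}\phi_{\pi_{L,H}}$, whereas you define $f(xK):=\phi(xH)$ directly. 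These are in fact the same function, since $\psi(xK)=\int_{K/L}\phi(xkH)\,d(kL)=\phi(xH)$ by the $K$-invariance of $\phi$; your route is the shorter one, while the paper's detour buys continuity and compact support of the preimage for free from Proposition \ref{pr32} applied to $R_{L,K}$, obligations you instead discharge by hand via properness of $\pi_{K}$ for compact $K$ --- and you do so correctly. Your observation that the image actually lies in $C_c(G/H:K)$, not merely $C(G/H:K)$, is also right and is left implicit in the paper.
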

\begin{proof}
It is clear that $ R_{K, H}(C_{c}(G/K : H)) \subseteq C(G/H) $. Since $ R_{K, H}f(xH)=f(xK) $, $ R_{K, H}f $ belongs to $ C(G/H : K) $, for any $ f $ in $ C_{c}(G/K : H) $. Then $R_{K, H}:C_{c}(G/K : H)\rightarrow C(G/H : K)$ is well-defined and in a similar way $ R^{*}_{K, H} $ is well-defined too. 
For any $ \phi \in C(G/H : K)$, by using the Theorem \ref{th2}, there exists a function $\phi_{\pi_{L, H}}\in C_c(G/L : H)$ such that $ R_{L,H}\phi_{\pi_{L, H}}=\phi $. Put $ \psi:=R_{L,K}\phi_{\pi_{L, H}} $. We have
\begin{align*}
\psi(xhK)\
&=\int_{K/L}\phi_{\pi_{L, H}}(xhkL)d(kL)\\
&=\int_{K/L}\phi\circ\pi_{L, H}(xhkL)d(kL)\\
&=\int_{K/L}\phi(xhkH)d(kL)\\
&=\int_{K/L}\phi(xkH)d(kL)\\
&=\psi(xK).
\end{align*}
Thus $ \psi \in C_c(G/K  :  H) $ and for all $ x \in G $ we have
\begin{align*}
R_{K,H}\psi(xH)\
&=\int_{H/L}\psi(xhK)d(hL)\\
&=\int_{H/L}\int_{K/L}\phi(xkH)d(kL)d(hL)\\
&=\phi(xH).\\
\end{align*}
So $ R_{K,H} $ is surjective.\\
Let $ f \in C_c(G/K  :  H) $ and $ R_{K, H}f=0 $. Then for all $ xK \in G/K $
\begin{align*}
f(xK)\
&=f(xK)\int_{H/L}d(hL)\\
&=\int_{H/L}f(xK)d(hL)\\
&=\int_{H/L}f(xhK)d(hL)\\
&=R_{K,H}f(xH)\\
&=0.
\end{align*}
So $ f\equiv0 $. Thus $ R_{K,H} $ is a one to one linear transform. Also, for all $ f \in C_c(G/K  :  H) $ and $ xL \in G/L $, we have
\begin{align*}
(R_{K, H}f)_{\pi_{L, H}}(xL)\
&=R_{K, H}f\circ \pi_{L, H}(xL)\\
&=R_{K, H}f(xH)\\
&=\int_{H/L}f(xhK)d(hL)\\
&=\int_{H/L}f(xK)d(hL)\\
&=f(xK)\int_{H/L}d(hL)\\
&=f\circ \pi_{L, K}(xL)\\
&=f_{\pi_{L, K}}(xL).
\end{align*}
\end{proof}
From now on, Consider $G/K$ and $G/H$ as two G-spaces on which $K$ and $H$ are two compact subgroups of a locally compact group $G$. Suppose that
\[ C_c(G  :  K):=\lbrace f \in C_c(G) : f(xk)=f(x) ,   \forall x \in G,    \forall k \in K\rbrace.\]
One can show that $ C_c(G  :  K) $ and $ C_c(G  :  H) $ are two left ideals in the algebra $ C_c(G) $ and $ C_c(G  :  K)=\lbrace \varphi_{\pi_{K}}=\varphi \circ \pi_{K} : \varphi \in C_{c}(G/K) \rbrace $, where $ \pi_{K}$ is the canonical map, moreover $T_{ K}:C_{c}(G : K)\rightarrow C(G/K)$ given by
\[T_{K}f(xK)=\int_{K}f(xk)dk \qquad (f\in C_{c}(G))\]
 is a bijective map (see \cite{1}). It is known that two G-spaces $G/K$ and $G/H$ are homeomorphic if and only if $K$ and $H$ are conjugate in $ G $ (see [4, Proposition 3.7]). In what follows, we shall prove that the algebras $ C_c(G/K) $ and $ C_c(G/H) $ are isomrphic if $G/K$ and $G/H$ are homeomorphic as G-spaces. 
\begin{lemma}\label{le4}
Let $H$ and $K$ be two compact subgroups of a locally compact group $G$. If the homogeneous spaces $G/K$ and $G/H$ are homeomorphic then $ C_c(G  :  K) $ and $ C_c(G  :  H) $ are isomorphic as the subalgebras of $ C_c(G) $.
\end{lemma}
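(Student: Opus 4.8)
The plan is to exhibit a single algebra automorphism of the convolution algebra $C_c(G)$ which carries the subalgebra $C_c(G:K)$ onto $C_c(G:H)$; its restriction is then the desired isomorphism. The starting point is the cited result [4, Proposition 3.7]: a homeomorphism $G/K\to G/H$ of $G$-spaces forces $K$ and $H$ to be conjugate, so I would fix $g_0\in G$ with $H=g_0Kg_0^{-1}$.

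First I would define $S:C_c(G)\to C_c(G)$ by $Sf(x)=\Delta_G(g_0)\,f(g_0^{-1}xg_0)$. Continuity and compact support of $Sf$ are immediate, since $S$ composes $f$ with the homeomorphism $x\mapsto g_0^{-1}xg_0$ and then multiplies by a positive scalar. The map $S$ is visibly linear, and it is a bijection with inverse $S^{-1}\varphi(x)=\Delta_G(g_0)^{-1}\varphi(g_0xg_0^{-1})$.

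Next I would check that $S$ respects convolution, i.e. $S(f*g)=Sf*Sg$. Writing $(Sf*Sg)(x)=\Delta_G(g_0)^2\int_G f(g_0^{-1}yg_0)\,g(g_0^{-1}y^{-1}xg_0)\,dy$ and substituting $y=g_0vg_0^{-1}$, the conjugation change of variables produces a factor $\Delta_G(g_0)^{-1}$; this follows from the paper's right-translation identity $\int_G\Phi(yx)\,dy=\Delta_G(x)^{-1}\int_G\Phi(y)\,dy$ applied to the translations by $g_0^{\pm1}$. What survives is exactly $\Delta_G(g_0)\,(f*g)(g_0^{-1}xg_0)=S(f*g)(x)$. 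The normalizing constant $\Delta_G(g_0)$ is chosen precisely so that the leftover modular factor cancels; this bookkeeping is the only delicate point, and it is the reason plain conjugation (without the scalar) is in general \emph{not} multiplicative unless $g_0$ lies in the kernel of $\Delta_G$. Thus $S$ is an algebra automorphism of $C_c(G)$.

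Finally I would verify that $S$ maps $C_c(G:K)$ onto $C_c(G:H)$. For $f\in C_c(G:K)$ and $h\in H$, writing $h=g_0kg_0^{-1}$ with $k\in K$ gives $g_0^{-1}xhg_0=(g_0^{-1}xg_0)k$, so $Sf(xh)=\Delta_G(g_0)\,f((g_0^{-1}xg_0)k)=\Delta_G(g_0)\,f(g_0^{-1}xg_0)=Sf(x)$ by right $K$-invariance; hence $Sf\in C_c(G:H)$. The same computation applied to $S^{-1}$, using $K=g_0^{-1}Hg_0$, shows $S^{-1}(C_c(G:H))\subseteq C_c(G:K)$, so $S$ restricts to a bijection between the two subalgebras. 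Since $S$ is already multiplicative on all of $C_c(G)$, this restriction is an algebra isomorphism, proving that $C_c(G:K)$ and $C_c(G:H)$ are isomorphic as subalgebras of $C_c(G)$.
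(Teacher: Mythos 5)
Your proof is correct, but it takes a genuinely different route from the paper. The paper also begins by extracting $g_0$ from the conjugacy of $K$ and $H$, but then defines $\tau(\varphi_{\pi_K})(x)=\varphi(\theta^{-1}(xH))=\varphi_{\pi_K}(xg_0^{-1})$, i.e.\ a \emph{right translation} by $g_0^{-1}$, and verifies only that this is a well-defined linear bijection of $C_c(G:K)$ onto $C_c(G:H)$; compatibility with the convolution product is never addressed. You instead build the modular-corrected conjugation $Sf(x)=\Delta_G(g_0)f(g_0^{-1}xg_0)$, which is an automorphism of the whole convolution algebra $C_c(G)$, check multiplicativity by the change of variables $y=g_0vg_0^{-1}$ (your bookkeeping of the factor $\Delta_G(g_0)^{-1}$ is right), and restrict. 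This buys you something real: a right translation satisfies $\tau(\varphi\ast\psi)=\varphi\ast\tau(\psi)$ but is in general \emph{not} multiplicative for convolution, so your construction is the one that actually delivers the ``isomorphic as subalgebras'' conclusion in its full algebraic sense, and it even exhibits the isomorphism as the restriction of a global automorphism of $C_c(G)$. The only cosmetic difference is the conjugation convention ($H=g_0Kg_0^{-1}$ versus the paper's implicit $H=g_0^{-1}Kg_0$ coming from $\theta(gK)=gg_0H$), which is harmless. If you wanted to match the paper's statement most literally you could also note that your $S$ and the paper's $\tau$ differ by a left translation, which explains why both induce the same bijection at the level of underlying sets.
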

\begin{proof}
Since $G/K$ and $G/H$ are homeomorphic so the subgroups $K$ and $H$ are conjugate in $ G $ and  there exists $ g_{0}\in G $ such that  the map $\theta : G/K \rightarrow G/H$ defined by  $gK \mapsto gg_{0}H $ is a homeomorphism. Now define the map  $ \tau : C_{c}(G : K) \rightarrow C_{c}(G : H)$ given by $\varphi_{\pi_{K}} \mapsto \varphi\circ\theta^{-1}_{\pi_{H}} $, where $ \pi_{H} $  is the canonical map and $ \theta^{-1} $ defined by  $\theta^{-1}(gH)=gg_{0}^{-1}K $ is the inverse of the homeomorphism $ \theta $. Since $ \pi_{H} $, $ \theta^{-1} $ and $ \varphi $ are continuous, we can conclude that $ \tau(\varphi_{\pi_{K}}) $ is continuous, also 
\[ \tau (\varphi_{\pi_{K}})(xh)=\varphi \circ \theta^{-1} \circ \pi_{H}(xh)=\varphi \circ \theta^{-1}(xhH)=\tau (\varphi_{\pi_{K}})(x).\]
Moreover, we have $ supp( \tau(\varphi_{\pi_{K}}))\subseteq supp (\varphi)$ so it is compact. Therefore the map $ \tau $ is well-defined.

To prove that $ \tau $ is surjective, let $ \psi_{\pi_{H}} $ be an arbitrary element in $C_{c}(G : H)$ and put $ \varphi:=\psi\circ \theta $. Then $ \varphi_{\pi_{K}}=\varphi\circ\pi_{K}\in C_{c}(G : K)$ and we have
\[ \tau(\varphi_{\pi_{K}})=\varphi \circ \theta^{-1} \circ \pi_{H}=\psi \circ \theta \circ \theta^{-1} \circ \pi_{H}=\psi \circ \pi_{H}=\psi_{\pi_{H}}.\]
Proving injectivity: \\
Suppose that $\tau(\varphi_{\pi_{K}})=0$ for any $ \varphi_{\pi_{K}}\in C_{c}(G : K) $ then
\begin{align*}
0\
&=\tau(\varphi_{\pi_{K}})(x)\\
&=\varphi \circ \theta^{-1} \circ \pi_{H}(x)\\
&=\varphi (\theta^{-1}(xH))\\
&=\varphi (xg_{0}^{-1}K),
\end{align*}
for any $ x\in G $. By replacing $x$ with $xg_{0}$, we get $\varphi (xK)=0$. So $ \varphi_{\pi_{K}(x)=0} $, for all $ x\in G $. This means that $ \tau $ is injective.   
\begin{remark}
It is straightforward to check that the linear oprator $ \tau $ is norm-decreasing. 
\end{remark}
\end{proof}
\begin{theorem}\label{pr5}
Let $H$ and $K$ be two compact subgroups of a locally compact group $G$. If the homogeneous spaces $G/K$ and $G/H$ are homeomorphic then two algebras $C_c(G/K) $ and $ C_c(G/H) $ are isomorphic.
\end{theorem}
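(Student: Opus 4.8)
The statement is a direct consequence of Lemma~\ref{le4} once $C_c(G/K)$ and $C_c(G/H)$ are identified with the subalgebras $C_c(G:K)$ and $C_c(G:H)$ of $C_c(G)$. The plan is therefore to assemble the desired algebra isomorphism as a composite of three maps and then check that multiplicativity survives the composition.

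First I would record the two identifications already available in the text. For the compact subgroup $K$ the assignment $\Phi_K:C_c(G/K)\to C_c(G:K)$, $\varphi\mapsto\varphi_{\pi_K}=\varphi\circ\pi_K$, is a linear bijection onto the subalgebra $C_c(G:K)$, with inverse the averaging operator $T_K$ (normalising $\int_K dk=1$ one has $T_K(\varphi_{\pi_K})=\varphi$); the same holds for $H$ via $\Phi_H:C_c(G/H)\to C_c(G:H)$. By Lemma~\ref{le4}, the homeomorphism of $G/K$ and $G/H$ supplies an element $g_0\in G$ with $g_0^{-1}Kg_0=H$, the homeomorphism $\theta:gK\mapsto gg_0H$, and the algebra isomorphism $\tau:C_c(G:K)\to C_c(G:H)$ given by $\varphi_{\pi_K}\mapsto\varphi\circ\theta^{-1}\circ\pi_H$.

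Next I would define the candidate map as the composite
\[
\Psi:=\Phi_H^{-1}\circ\tau\circ\Phi_K:C_c(G/K)\longrightarrow C_c(G/H).
\]
Chasing a function $\varphi\in C_c(G/K)$ through the three arrows shows that $\Psi$ is simply pullback along $\theta^{-1}$, namely $\Psi(\varphi)=\varphi\circ\theta^{-1}$, so that $\Psi(\varphi)(xH)=\varphi(xg_0^{-1}K)$. Since $\Phi_K$, $\tau$ and $\Phi_H^{-1}$ are each linear bijections, $\Psi$ is automatically a linear bijection whose inverse is pullback along $\theta$, that is $\psi\mapsto\psi\circ\theta$. Continuity and compactness of the support of $\Psi(\varphi)$ follow from those of $\varphi$ together with the homeomorphism $\theta$, exactly as in the proof of Lemma~\ref{le4}.

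The only point that genuinely requires care, and the step I expect to be the main obstacle, is that $\Psi$ respects the algebra product, i.e. $\Psi(\varphi\ast\psi)=\Psi(\varphi)\ast\Psi(\psi)$. Here I would transport the convolution on $C_c(G/K)$ to $C_c(G:K)$ through $\Phi_K$, invoke that $\tau$ is an algebra homomorphism on the subalgebras (Lemma~\ref{le4}), and transport back through $\Phi_H^{-1}$. The conjugacy $g_0^{-1}Kg_0=H$ and the compactness of $K$ and $H$ are exactly what make the invariant measures on $G/K$ and $G/H$ correspond under $\theta$ with the normalisations $\int_K dk=\int_H dh=1$, so that no distortion factor appears and the product is preserved. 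Once multiplicativity is verified, $\Psi$ is the required algebra isomorphism and the proof is complete.
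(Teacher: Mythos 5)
Your proposal is correct and follows essentially the same route as the paper: your composite $\Psi=\Phi_H^{-1}\circ\tau\circ\Phi_K$ is exactly the paper's $T=T_H\circ\tau\circ T_K^{-1}$ (since $\Phi_K=T_K^{-1}$ and $\Phi_H^{-1}=T_H$), yielding the same explicit formula $T\varphi(xH)=\varphi(xg_0^{-1}K)$ and the same bijectivity argument. The one difference is that you explicitly flag and sketch the verification that the map respects the convolution product, a point the paper's own proof (and, for that matter, its proof of Lemma~\ref{le4}) leaves unaddressed.
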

\begin{proof}
By using Proposition 3.1 in \cite{1}, $T_{ H}$ and $T_{ K}$  are bijectives. Consider the isomorphism $ \tau : C_{c}(G : K) \rightarrow C_{c}(G : H)$ according to the Lamma \ref{le4} and define $ T : C_{c}(G/K) \rightarrow C_{c}(G/H)$ by $T:=T_{ H} \circ \tau \circ T_{ K}^{-1}. $ Assume that $ \varphi \in C_{c}(G/K) $. Then, $ T \varphi (xH)=\varphi (xg_{0}^{-1}K )$. It is easy to see that for any $ \varphi_{1}=\varphi_{2} \in C_{c}(G/K) $ we get $  T \varphi_{1}=T \varphi_{2}  $. So the map $ T $ is well- defined. To see that the map is injective, Let $ T \varphi=\varphi (xg_{0}^{-1}K) =0$, for all $ x\in G $. Replace $ x $ by $ xg_{0} $. Then the injectivity is trivial. It remains only to show that it is surjective. Let $ \psi $ be an arbitrary element in $C_{c}(G/H)$. Then $ \psi{\pi_{H}}\in C_{c}(G : K) $ and $ T (T_{K} \circ  \tau^{-1} \circ \psi{\pi_{H}})=T_{H} \circ  \tau \circ T_{K}^{-1} \circ T_{K} \circ  \tau^{-1} \circ \psi_{\pi_{H}}=T_{H} \psi_{\pi_{H}}=\psi $. So the result was obtained. 
  
\end{proof}
For the inverse of this theorem, the well-known results of J. G. Wendel have been generalized by the authores.

\end{document}